\DeclareMathAlphabet{\mathcal}{OMS}{cmsy}{m}{n}
\newtheorem{thm}{Theorem}[section]
\newtheorem{cor}[thm]{Corollary}
\newtheorem{lem}[thm]{Lemma}
\newtheorem{prop}[thm]{Proposition}
\newtheorem{rem}[thm]{Remark}
\newcommand{\bt}{\begin{thm}}
\newcommand{\et}{\end{thm}}
\newcommand{\bl}{\begin{lem}}
\newcommand{\el}{\end{lem}}
\newcommand{\bc}{\begin{cor}}
\newcommand{\ec}{\end{cor}}
\newcommand{\br}{\begin{rem}}
\newcommand{\er}{\end{rem}}
\newcommand{\ba}{\begin{array}}
\newcommand{\ea}{\end{array}}
\newcommand{\bea}{\begin{eqnarray}}
\newcommand{\eea}{\end{eqnarray}}
\newcommand{\bead}{\begin{eqnarray*}}
\newcommand{\eead}{\end{eqnarray*}}
\newcommand{\be}{\begin{equation}}
\newcommand{\ee}{\end{equation}}
\newcommand{\bed}{\begin{displaymath}}
\newcommand{\eed}{\end{displaymath}}
\newcommand{\bps}{\begin{split}}
\newcommand{\eps}{\end{split}}
\newcommand{\la}{\label}
\newcommand\dR{{\mathbb{R}}}
\newcommand\dN{{\mathbb{N}}}
\newcommand\gotH{{\mathfrak{H}}}
\newcommand{\ga}{{\alpha}}
\newcommand{\gb}{{\beta}}
\newcommand{\gD}{{\Delta}}
\newcommand\cI{{\mathcal{I}}}
\newcommand\cL{{\mathcal{L}}}
\newcommand{\dom}{\mathrm{dom}}
\DeclareMathOperator*{\esssup}{ess\,sup}
\def\1{{\bf 1}}
\begin{document}

\title*{Solution Operator for Non-Autonomous
Perturbation of Gibbs Semigroup
\\
\vspace{5mm}
}
\titlerunning{Non-Autonomous Dynamics}

\author{}
\authorrunning{V.~A.~Zagrebnov}

\maketitle

\vspace{-2cm}

\hspace {6cm} \textit{To the memory of Hagen Neidhardt}

\begin{center}

Valentin A. Zagrebnov

\smallskip

Aix-Marseille Universit\'{e}, CNRS, Centrale Marseille, I2M \\
Institut de Math\'{e}matiques de Marseille  (UMR 7373)\\
CMI - Technop\^{o}le Ch\^{a}teau-Gombert\\
39 rue F. Joliot Curie, 13453 Marseille, France
\end{center}

\vspace{1cm}

\abstract{\\
The paper is devoted to a linear dynamics for non-autonomous
perturbation of the Gibbs semigroup on a separable Hilbert space.
It is shown that {evolution family} $\{U(t,s)\}_{0 \leq s \leq t}$ solving the
non-autonomous Cauchy problem can be approximated in the \textit{trace-norm} topology by product
formulae. The rate of convergence of product formulae approximants
$\{U_n(t,s)\}_{\{0 \leq s \leq t, n\geq1\}}$ to the
{solution operator} $\{U(t,s)\}_{\{0 \leq s \leq t\}}$ is also established.
}
\numberwithin{equation}{section}
\renewcommand{\theequation}{\arabic{section}.\arabic{equation}}
\section{Introduction and main result} \label{sec:I}
\noindent
The aim of the paper is two-fold. Firstly, we study a linear dynamics,
which is a non-autonomous perturbation of Gibbs semigroup.
Secondly, we prove \textit{product formulae} approximations of the corresponding to this dynamics
\textit{solution operator} $\{U(t,s)\}_{\{0 \leq s \leq t\}}$, known also as \textit{evolution family},
\textit{fundamental solution}, or \textit{propagator}, see \cite{EngNag2000} Ch.VI, Sec.9.

To this end we consider on separable Hilbert space $\gotH$ a linear non-autonomous dynamics given by
evolution equation of the type:
\be\la{eq:1.1}
\begin{split}
\frac{\partial u(t)}{\partial t} = & -C(t) u(t), \quad u(s) = u_s,
\quad s \in [0,T)\subset \dR_{0}^{+},\\
C(t) := & \, A +B(t), \quad u_s \in \gotH ,
\end{split}
\qquad t \in \cI := [0,T],
\ee
where $\dR_{0}^{+} = \{0\} \cup \dR^{+}$ and linear operator $A$ is generator of a Gibbs semigroup.
Note that for the autonomous Cauchy problem (ACP) (\ref{eq:1.1}), when $B(t)=B$, the outlined programme
corresponds to the Trotter product formula approximation of the
Gibbs semigroup generated by a closure of operator $A + B$, \cite{Zag2019} Ch.5.

The main result of the present paper concerns the non-autonomous Cauchy problem (nACP) (\ref{eq:1.1})
under the following

\textbf{Assumptions:}

(A1) The operator $A \ge \mathds{1}$ in a separable Hilbert space $\gotH$ is self-adjoint.
The family $\{B(t)\}_{t \in \cI}$  of non-negative self-adjoint operators in $\gotH$ is such that
the bounded operator-valued function $(\mathds{1} + B(\cdot))^{-1}: \cI \longrightarrow \cL(\gotH)$
is strongly measurable.

(A2) There exists $\ga \in [0,1)$ such that inclusion: $\dom(A^\ga) \subseteq \dom(B(t))$, holds for a.e.
$t \in \cI$. Moreover, the function $B(\cdot)A^{-\ga}: \cI \longrightarrow \cL(\gotH)$
is strongly measurable and essentially bounded in the operator norm:
\be\la{eq:1.1a}
C_\ga := \esssup_{t\in \cI}\|B(t)A^{-\ga}\| < \infty.
\ee

(A3) The map $A^{-\ga}B(\cdot)A^{-\ga}: \cI \longrightarrow \cL(\gotH)$ is H\"older continuous in
the operator norm: for some $\gb \in (0,1]$ there is a constant $L_{\ga,\gb} > 0$  such that
one has estimate
\be\la{eq:1.1-1}
\|A^{-\ga}(B(t) - B(s))A^{-\ga}\| \le L_{\ga,\gb} |t-s|^\gb, \quad (t,s) \in \cI \times \cI .
\ee

(A4) The operator $A$ is generator of the Gibbs semigroup $\{G(t)= e^{-t A}\}_{t\geq 0}$, that is,
a strongly continuous semigroup such that $G(t)|_{t >0} \in \mathcal{C}_{1}(\mathfrak{H})$. Here
$\mathcal{C}_{1}(\mathfrak{H})$ denotes the $\ast$-ideal of \textit{trace-class} operators in
$C^*$-algebra $\mathcal{L}(\mathfrak{H})$ of bounded operators on $\mathfrak{H}$.

\br\la{rem:1.1-1}
Assumptions \emph{(A1)-(A3)} are introduced in \emph{\cite{IT1998}} to prove the
\emph{operator-norm} convergence of product formula approximants $\{U_n(t,s)\}_{0 \leq s \leq t}$
to solution operator $\{U(t,s)\}_{0 \leq s \leq t}$. Then they were widely used for product
formula approximations in \emph{\cite{NSZ2017}-\cite{Nickel2000}} in the context of
the \emph{evolution semigroup} approach to the nACP, see \emph{\cite{MR2000}-\cite{Nei1981}}.
\er
\br\la{rem:1.1} The following main facts were established
\emph{(}see, e.g., \emph{\cite{IT1998,NN2002,VWZ2009,Yagi1989}}\emph{)}
about the nACP for perturbed evolution equation of the type \eqref{eq:1.1}\emph{:}\\
\emph{(a)} By assumptions \emph{(A1)-(A2)} the operators $\{C(t) = A + B(t)\}_{t\in \cI}$ have
a common $\dom(C(t)) = \dom(A)$ and they are generators of contraction holomorphic semigroups. Hence,
the nACP \eqref{eq:1.1} is of \emph{parabolic} type \emph{\cite{Kato1961,Sob1961}}. \\
\emph{(b)} Since domains $\dom(C(t)) = \dom(A)$, $t\geq 0$, are dense, the nACP is \emph{well-posed}
with time-independent \emph{regularity} subspace $\dom(A)$.\\
\emph{(c)} Assumptions \emph{(A1)-(A3)} provide the existence of
\emph{evolution family} solving nACP \eqref{eq:1.1} which we call the \emph{solution operator}.
It is a strongly continuous, uniformly bounded family of operators
$\{U(t,s)\}_{(t,s) \in \gD}$, $\gD~:=~\{(t,s) \in \cI \times \cI: 0 \le s \le t\le T\}$,
such that the conditions
\be\la{eq:1.2}
\begin{split}
U(t,t) =& \ \mathds{1} \quad \mbox{for} \quad t \in \cI,\\
U(t,r)U(r,s) =& \ U(t,s), \quad \mbox{for}, \quad t,r,s \in \cI \quad \mbox{for}
\quad s \le r \le t,
\end{split}
\ee
are satisfied and $u(t) = U(t,s)\, u_s$ for any $u_s \in \mathfrak{H}_s$ is in a certain sense
\emph{(}e.g., \emph{classical, strict, mild}\emph{)} solution of the nACP \eqref{eq:1.1}.\\
\emph{(d)}
Here $\mathfrak{H}_s \subseteq \mathfrak{H}$ is an appropriate \emph{regularity subspace} of
initial data. Assumptions \emph{(A1)-(A3)} provide $\mathfrak{H}_s = \dom(A)$ and
$U(t,s)\mathfrak{H} \subseteq \dom(A)$ for $t > s$.
\er

In the present paper we essentially focus on convergence of the product \textit{approximants}
$\{U_n(t,s)\}_{(t,s)\in \gD, n\geq1}$ to {solution operator}
$\{U(t,s)\}_{(t,s)\in\gD}$. Let
\be\la{eq:1.6a}
s = t_1 < t_2 < \ldots < t_{n-1} < t_{n} < t, \quad t_k := s + (k-1)\tfrac{t-s}{n},
\ee
for $k \in \{1,2,\ldots,n\}$, $ n \in \dN$, be partition of the interval $[s,t]$. Then
corresponding approximants may be defined as follows:
\be\la{eq:1.6}
\begin{split}
W_{k}^{(n)}(t,s) :=& e^{-\tfrac{t-s}{n} A}e^{-\tfrac{t-s}{n} B(t_{k})}, \quad k = 1,2,\ldots,n,\\
U_n(t,s) :=& W_n^{(n)}(t,s)W_{n-1}^{(n)}(t,s)\times \cdots \times  W_2^{(n)}(t,s) W_1^{(n)}(t,s).\\
\end{split}
\ee

It turns out that if the assumptions (A1)-(A3), \textit{adapted} to a Banach space $\mathfrak{X}$,
are satisfied for $\alpha \in (0,1)$, $\beta \in (0,1)$ and in addition the condition
$\alpha < \beta$ holds, then solution operator $\{U(t,s)\}_{(t,s)\in \gD}$ admits the
operator-norm approximation
\be\la{eq:1.8v}
\esssup_{(t,s)\in \gD}\|U_n(t,s) - U(t,s)\| \le \frac{R_{\beta,\alpha}}{n^{\beta-\alpha}},
\quad n \in \dN,
\ee
for some constant $R_{\beta,\alpha}> 0$. This result shows that convergence of the
approximants $\{U_n(t,s)\}_{(t,s)\in \gD, n\geq1}$ is determined by the smoothness of the
perturbation $B(\cdot)$ in (A3) and by the parameter of inclusion in (A2), see \cite{NSZ2020}.

The {Lipschitz} case $\beta =1$ was considered for $\mathfrak{X}$ in \cite{NSZ2017}.
There it was shown that if $\alpha \in (1/2, 1)$, then one gets estimate
\be\la{eq:1.9-1}
\esssup_{t \in \cI}\|U_n(t,s) - U(t,s)\| \le \frac{R_{1,\alpha}}{n^{1-\alpha}}\ ,
\quad n=2,3,\ldots\,.
\ee

For the Lipschitz case in a Hilbert space $\mathfrak{H}$ the assumptions (A1)-(A3)
yield a stronger result \cite{IT1998}:
\be\la{eq:1.9}
\esssup_{(t,s)\in \gD}\|U_n(t,s) - U(t,s)\| \le R \ {{\frac{\log(n)}{n}}}, \quad n=2,3,\ldots\,.
\ee
Note that actually it is the best of known estimates for operator-norm rates of convergence
under conditions (A1)-(A3).

The estimate \eqref{eq:1.8v} was improved in \cite{NSZ2019} for $\alpha \in (1/2, 1)$ in a Hilbert space
using the \textit{evolution semigroup} approach \cite{Ev1976,How1974,Nei1981}. This approach is quite
different from technique used for (\ref{eq:1.9}) in \cite{IT1998}, but it is the same as that
employed in \cite{NSZ2017}.
\begin{prop}\la{prop:1.1}\emph{\cite{NSZ2019}}
Let assumptions \emph{(A1)-(A3)} be satisfied for $\beta \in (0,1)$.
If $\gb > 2\ga - 1 > 0$, then estimate
\be\la{eq:1.10}
\esssup_{(t,s)\in \gD}\|U_n(t,s) - U(t,s)\| \le \frac{R_{\beta}}{n^{\beta}}\, ,
\ee
holds for $n \in \dN$ and for some constant $R_\gb > 0$.
\end{prop}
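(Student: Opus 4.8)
The plan is to reduce the non-autonomous rate estimate to an \emph{autonomous} Trotter--Kato problem by means of the Howland--Evans \emph{evolution semigroup} lifting. First I would pass to the Hilbert space $\gotX := L^2(\cI;\gotH)$ and introduce there the evolution semigroup $\{\cU(\gs)\}_{\gs\ge 0}$ generated by $\cK = \cD_0 + \cA + \cB$, where $\cD_0 = -d/dt$ is the generator of the nilpotent right-shift (zero boundary value at $t=0$), while $\cA$ and $\cB$ denote the multiplication operators induced by $A$ and by $B(\cdot)$. Using \emph{(A1)--(A3)} one checks that $\cK$ is well defined and that $(\cU(\gs)f)(t) = U(t,t-\gs)\,f(t-\gs)$ represents the solution operator from Remark~\ref{rem:1.1}. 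The decisive algebraic point is that the lifted Trotter product reproduces the approximant \eqref{eq:1.6}: writing $\cG_0$ for the free evolution semigroup $(\cG_0(\gt)f)(t) = e^{-\gt A}f(t-\gt)$ and evaluating the iterate $\bigl((\cG_0(\gt)\,e^{-\gt\cB})^n f\bigr)(t)$ with $\gt = (t-s)/n$, the shift automatically samples $B$ at the grid points $t_k$ of \eqref{eq:1.6a}, so that $\bigl((\cG_0(\gt)\,e^{-\gt\cB})^n f\bigr)(t) = U_n(t,s)\,f(s)$. Thus the whole problem becomes the operator-norm estimate of $(\cG_0(\gt)\,e^{-\gt\cB})^n - \cU(n\gt)$ on $\gotX$, uniformly in the partition.

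Second, I would develop the single-step defect and sum it by telescoping. Stability of the products $(\cG_0(\gt)e^{-\gt\cB})^{m}$ and $\cU(m\gt)$ in the operator norm follows from contractivity (since $A, B(t)\ge 0$), so the error splits as
\be
(\cG_0(\gt)e^{-\gt\cB})^n - \cU(n\gt)
= \sum_{j=0}^{n-1}(\cG_0(\gt)e^{-\gt\cB})^{\,n-1-j}\,\bigl(\cG_0(\gt)e^{-\gt\cB} - \cU(\gt)\bigr)\,\cU(j\gt).
\ee
For the middle factor I would use a twofold Duhamel representation of the splitting error, together with the lifted relative bound $\cB = (\cB\cA^{-\ga})\cA^{\ga}$ coming from \emph{(A2)} and the smoothing estimates $\|\cA^{\gga}\cG_0(\gs)\|\le c\,\gs^{-\gga}$ inherited from $\|A^{\gga}e^{-\gs A}\|\le c\,\gs^{-\gga}$. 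Here the Hilbert-space self-adjointness of $A$ and $B(t)$ is essential: distributing the fractional powers symmetrically in the form $\cA^{-\ga}\cB\cA^{-\ga}$, exactly as in \emph{(A3)}, lets one replace the crude Banach bound $O(\gt^{1+\gb-\ga})$ by the sharp per-step defect $O(\gt^{1+\gb})$.

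Finally, summing the $n$ telescoped terms gives a total of order $n\cdot\gt^{1+\gb} = O(n^{-\gb})$, and since every constant depends only on $C_\ga$, $L_{\ga,\gb}$, $T$, $\ga$, $\gb$ (never on the particular $(t,s)$), the evaluation map returns the pointwise conclusion $\esssup_{(t,s)\in\gD}\|U_n(t,s)-U(t,s)\|\le R_\gb\,n^{-\gb}$.

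The hard part is the sharp single-step estimate. In the Duhamel integrals the relative boundedness of $\cB$ produces integrands that behave like $\gs^{-\ga}(\gt-\gs)^{-\ga}$; their integrability over $[0,\gt]$ forces $2\ga < 1+\gb$, which is precisely the hypothesis $\gb > 2\ga - 1 > 0$. Squeezing out the \emph{full} H\"older power $\gt^{1+\gb}$ --- rather than losing an extra factor $\gt^{-\ga}$, as in the Banach-space estimate \eqref{eq:1.8v} --- is where the symmetric quadratic-form (Hilbert-space) estimates must be deployed, and maintaining the uniform operator-norm boundedness of the telescoped products is the accompanying technical nuisance.
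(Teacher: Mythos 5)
Your proposal is essentially the paper's own route: Proposition~\ref{prop:1.1} is imported from \cite{NSZ2019}, whose proof (as the paper itself notes, the same evolution-semigroup method as in \cite{NSZ2017}, following \cite{Ev1976,How1974,Nei1981}) proceeds exactly by the lifting you describe --- passing to $L^2(\cI;\gotH)$, identifying the iterates of the lifted Trotter product $\bigl(\cG_0(\gt)e^{-\gt\cB}\bigr)^n$ with the approximants \eqref{eq:1.6} via the shift's sampling of $B$ at the grid \eqref{eq:1.6a}, and then telescoping with Duhamel and fractional-power/H\"older estimates from (A2)--(A3), with the constraint $\gb>2\ga-1>0$ emerging from the Hilbert-space (symmetric) distribution of the factors $A^{-\ga}$. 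Your sketch matches that argument in structure and in where the hypothesis is used, so it is the same approach rather than an alternative one.
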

Note that the condition $\gb > 2\ga-1$ is weaker than $\gb > \ga$ (\ref{eq:1.8v}), but
it does not cover the Lipschitz case (\ref{eq:1.9-1}) because of condition  $\beta < 1$.

{The main result of the present paper is the lifting of \textit{any} known operator-norm
bounds (\ref{eq:1.8v})-(\ref{eq:1.10}) (we denote them by
$R_{\alpha,\beta}\varepsilon_{\alpha,\beta}(n)$)
to estimate in the trace-norm topology $\|\cdot\|_1$. This is a subtle matter even for ACP, see
\cite{Zag2019} Ch.5.4.\\
- The first step is the construction for nACP \eqref{eq:1.1} a \textit{trace-norm} continuous solution
operator $\{U(t,s)\}_{(t,s)\in \gD}$, see Theorem \ref{thm:2.1} and Corollary \ref{cor:2.1}.\\
- Then in Section \ref{sec:III} for assumptions (A1)-(A4) we prove (Theorem \ref{thm:1.1}) the
corresponding \textit{trace-norm} estimate $R_{\alpha,\beta}(t,s)\varepsilon_{\alpha,\beta}(n)$ for
difference $\|U_n(t,s) - U(t,s)\|_1$.}
\begin{thm}\la{thm:1.1}
Let assumptions \emph{(A1)-(A4)} be satisfied. Then the estimate
\be\la{eq:1.11}
\|U_n(t,s) - U(t,s)\|_1 \le R_{\alpha,\beta}(t,s)\varepsilon_{\alpha,\beta}(n) \, ,
\ee
holds for $n \in \dN$ and $0\leq s < t \leq T$ for some $R_{\alpha,\beta}(t,s) > 0$.
\end{thm}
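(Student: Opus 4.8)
The plan is to \emph{lift} the operator-norm rate $\varepsilon_{\alpha,\beta}(n)$ to the trace norm by pairing, in each term of a telescoping identity, a \emph{fixed-length} block of the approximant estimated in $\|\cdot\|_1$ with an operator-norm difference that carries the known rate. The starting point is that the product structure \eqref{eq:1.6} is self-similar: for the midpoint $r:=(s+t)/2$ and even $n=2m$ the point $r=t_{m+1}$ is a partition point of \eqref{eq:1.6a}, and splitting the product \eqref{eq:1.6} at index $m$ (with the evaluation points $t_1,\dots,t_m$ and $t_{m+1},\dots,t_{2m}$ matching those of the two subproblems) gives $U_n(t,s)=U_m(t,r)\,U_m(r,s)$, while the cocycle property \eqref{eq:1.2} yields $U(t,s)=U(t,r)\,U(r,s)$. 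Odd $n$ is handled by splitting at the partition point nearest the midpoint, which only affects constants.

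Next I would establish the key uniform trace-norm bound: for any $0\le\sigma'<\sigma\le T$,
\[
\|U_m(\sigma,\sigma')\|_1 \;\le\; \mathrm{Tr}\,e^{-(\sigma-\sigma')A}, \qquad \text{independently of } m .
\]
This is where (A4) and the non-negativity of $B(\cdot)$ enter. Writing $U_m(\sigma,\sigma')=\prod_{k=m}^{1} e^{-\tilde\tau A}e^{-\tilde\tau B(t_k)}$ with $\tilde\tau=(\sigma-\sigma')/m$, using $\|e^{-\tilde\tau B(t_k)}\|\le 1$ (since $B(t_k)\ge 0$), and applying the generalized H\"older inequality for Schatten norms — assigning the class $\mathcal{C}_m(\gotH)$ to each of the $m$ Gibbs factors, so that $\sum 1/m=1$ — one obtains $\|U_m(\sigma,\sigma')\|_1\le(\|e^{-\tilde\tau A}\|_m)^m=\mathrm{Tr}\,e^{-m\tilde\tau A}=\mathrm{Tr}\,e^{-(\sigma-\sigma')A}$, finite by (A4) and free of $m$. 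The trace-class property of $U(r,s)$ is supplied directly by Theorem~\ref{thm:2.1}/Corollary~\ref{cor:2.1}.

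With these ingredients the result follows from a single telescoping:
\[
U_n(t,s)-U(t,s) = U_m(t,r)\,[U_m(r,s)-U(r,s)] + [U_m(t,r)-U(t,r)]\,U(r,s),
\]
bounding the first summand by $\|U_m(t,r)\|_1\,\|U_m(r,s)-U(r,s)\|$ and the second by $\|U_m(t,r)-U(t,r)\|\,\|U(r,s)\|_1$, via $\|XY\|_1\le\|X\|_1\|Y\|$ and $\|XY\|_1\le\|X\|\,\|Y\|_1$. In each summand the trace class is carried by a fixed-length factor while the \emph{operator-norm} difference carries the rate. Inserting the uniform bound above together with the quoted operator-norm estimate (\eqref{eq:1.8v}--\eqref{eq:1.10}, resp.\ Proposition~\ref{prop:1.1}) applied to the pairs $(r,s)$ and $(t,r)$ with $m$ steps gives $\|U_n(t,s)-U(t,s)\|_1\le R_{\alpha,\beta}\,[\mathrm{Tr}\,e^{-(t-r)A}+\|U(r,s)\|_1]\,\varepsilon_{\alpha,\beta}(m)$; since $\varepsilon_{\alpha,\beta}(m)=\varepsilon_{\alpha,\beta}(n/2)\le c\,\varepsilon_{\alpha,\beta}(n)$ for each admissible rate (power-law or $\log(n)/n$), one arrives at \eqref{eq:1.11} with $R_{\alpha,\beta}(t,s)$ absorbing the constants.

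The main obstacle is the uniform-in-$m$ trace-norm control of the fixed-length block: naive extraction of a single Gibbs factor $e^{-\tilde\tau A}$ fails because $\|e^{-\tilde\tau A}\|_1\to\infty$ as $\tilde\tau=(\sigma-\sigma')/m\to 0$, so the whole block must be kept together and estimated by the Schatten--H\"older argument against the $m$-independent quantity $\mathrm{Tr}\,e^{-(\sigma-\sigma')A}$. The remaining work is bookkeeping: verifying the self-similar factorization and the matching of the evaluation points $t_k$, treating odd $n$, and checking that the operator-norm bounds indeed apply on the subintervals $[s,r]$ and $[r,t]$, which holds since (A1)--(A3) are interval-independent.
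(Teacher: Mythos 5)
Your proposal is correct, and its skeleton coincides with the paper's: the paper likewise splits the product at an index $k_n$ with $k_n/n\to 1/2$, pairs on each half an operator-norm difference carrying the rate $\varepsilon_{\alpha,\beta}(n)$ (estimates \eqref{eq:3.2a}--\eqref{eq:3.2b}) with a trace-norm-bounded block (estimates \eqref{eq:3.3a}--\eqref{eq:3.3b}), and assembles $R_{\alpha,\beta}(t,s)$ exactly as you do; your self-similarity identity $U_{2m}(t,s)=U_m(t,r)U_m(r,s)$ with $r=(s+t)/2$ is in fact the precise justification, left implicit in the paper, for why the half-products converge to $U(t,(t+s)/2)$ and $U((t+s)/2,s)$ at the stated rate. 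Where you genuinely diverge is the engine behind the uniform trace-norm control of the half-blocks. The paper proves a standalone Lemma~\ref{lem:2.1}, valid for \emph{arbitrary} positive exponents $t_1,\dots,t_n$, by a dyadic comparison of singular values, at the price of degrading the exponent to $\|e^{-(t_1+\cdots+t_n)A/4}\|_1$ in \eqref{3.60}; you instead exploit that the steps are all equal to $\tilde\tau=(\sigma-\sigma')/m$ and invoke the generalized H\"older inequality for Schatten ideals, assigning $\mathcal{C}_m(\gotH)$ to each of the $m$ Gibbs factors and $\mathcal{L}(\gotH)$ to the contractions $e^{-\tilde\tau B(t_k)}$, which (since $e^{-\tilde\tau A}\geq 0$ is self-adjoint, so $\|e^{-\tilde\tau A}\|_m^m=\mathrm{Tr}\,e^{-m\tilde\tau A}$) yields the sharp, $m$-independent bound $\|U_m(\sigma,\sigma')\|_1\leq \mathrm{Tr}\,e^{-(\sigma-\sigma')A}$ with constant one and no loss of the factor $4$ in the exponent. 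You also sidestep the paper's terser step \eqref{eq:3.3b} — which bounds the trace norm of a \emph{product} of exact propagators $\prod_l U(t_{l+1},t_l)$ — by arranging the telescoping so that only the single exact factor $U(r,s)$ needs a trace-norm bound, supplied directly by Theorem~\ref{thm:2.1}; this is a genuine simplification. What the paper's lemma buys in exchange is reusability: its unequal-exponent generality serves the variants of Corollary~\ref{cor:3.1} and the symmetrically-normed-ideal extensions mentioned at the end, whereas your H\"older argument is tailored to equal steps (though it would in fact also cover the equal-step products in \eqref{eq:1.6-1}--\eqref{eq:1.6-2}). Your remaining bookkeeping — odd $n$ split at the partition point $t_{\lfloor n/2\rfloor+1}$, the comparison $\varepsilon_{\alpha,\beta}(n/2)\leq c\,\varepsilon_{\alpha,\beta}(n)$ for each quoted rate, and the applicability of \eqref{eq:1.8v}--\eqref{eq:1.10} to the pairs $(r,s),(t,r)\in\gD$ — is sound.
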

\section{Preliminaries}\la{sec:II}
\noindent
Besides Remark \ref{rem:1.1}(a)-(d) we also recall the following assertion, see, e.g., \cite{Sob1961},
Theorem 1, \cite{Tanabe1979}, Theorem 5.2.1.
\begin{prop}\la{prop:2.1}
Let assumptions \emph{(A1)-(A3)} be satisfied.\\
\emph{(a)} Then solution operator $\{U(t,s)\}_{(t,s)\in\gD}$ is strongly continuously differentiable
for $0 \leq s < t \leq T$ and
\begin{equation}\label{eq:2.1}
\partial_{t}U(t,s) = - (A + B(t))U(t,s).
\end{equation}
\emph{(b)} Moreover, the unique function $t \mapsto u(t) = U(t,s)\, u_s$ is a \emph{classical} solution
of \eqref{eq:1.1} for initial data  $\mathfrak{H}_s = \dom(A)$.
\end{prop}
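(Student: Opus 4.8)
The plan is to realize $\{U(t,s)\}$ as the fundamental solution produced by the classical Tanabe--Sobolevskii parametrix method, the abstract hypotheses of which are guaranteed here by (A1)--(A3); this is the route indicated by the cited \cite{Sob1961,Tanabe1979}. First I would record the analytic inputs collected in Remark~\ref{rem:1.1}(a): by (A1)--(A2) each $C(t)=A+B(t)$ is a self-adjoint generator of a holomorphic contraction semigroup with the common domain $\dom(C(t))=\dom(A)$ and resolvent bounds uniform in $t$ on a fixed sector. Self-adjointness and holomorphy give the smoothing estimate $\|A^{\ga}e^{-\tau C(s)}\|=\|e^{-\tau C(s)}A^{\ga}\|\le c\,\tau^{-\ga}$, which together with the relative bound $\|B(t)A^{-\ga}\|\le C_\ga$ of (A2) is the workhorse for all kernel estimates, while (A3) supplies the two-sided H\"older regularity of the coefficient family.

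Next I would build $U(t,s)$ by successive approximation. Freezing the generator at the initial time, set $G(t,s):=e^{-(t-s)C(s)}$ and form its defect
\be\la{eq:prop21-R1}
R_1(t,s):=-\big(\partial_t+C(t)\big)G(t,s)=\big(C(s)-C(t)\big)e^{-(t-s)C(s)} .
\ee
Writing $C(s)-C(t)=(B(s)-B(t))A^{-\ga}\cdot A^{\ga}$ and applying (A2) together with the smoothing estimate yields the bound $\|R_1(t,s)\|\le c\,(t-s)^{-\ga}$, a weakly singular kernel because $\ga<1$. The iterated kernels $R_{m+1}(t,s):=\int_s^t R_1(t,r)R_m(r,s)\,dr$ then obey the usual Beta-function convolution estimates, so the Neumann series $R(t,s):=\sum_{m\ge1}R_m(t,s)$ converges in operator norm with $\|R(t,s)\|\le c\,(t-s)^{-\ga}$, and I would set
\be\la{eq:prop21-U}
U(t,s):=G(t,s)+\int_s^t G(t,r)\,R(r,s)\,dr .
\ee
The integral equation satisfied by $R$ is designed precisely so that $U(t,t)=\mathds{1}$ and the evolution law \eqref{eq:1.2} hold.

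I would then verify the two assertions. For part~(a), differentiating \eqref{eq:prop21-U} in $t$ and using $(\partial_t+C(t))G(t,r)=-R_1(t,r)$ together with the integral equation for $R$ produces the exact cancellation $(\partial_t+C(t))U(t,s)=0$, i.e.\ \eqref{eq:2.1}; the same computation shows $U(t,s)\gotH\subseteq\dom(A)$ for $t>s$ and that $t\mapsto\partial_t U(t,s)$ is strongly continuous on $s<t\le T$. For part~(b), given $u_s\in\dom(A)$ the map $u(t)=U(t,s)u_s$ is norm-continuous up to $t=s$ with $u(s)=u_s$, takes values in $\dom(A)$ and is continuously differentiable for $t>s$, and hence is a classical solution of \eqref{eq:1.1}; uniqueness follows because any classical solution of the homogeneous problem with null initial datum must vanish, as one sees by pairing it against the already-constructed propagator on $[s,t]$.

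The step I expect to be the genuine obstacle is the regularity underlying part~(a): applying $C(t)$ to the integral term in \eqref{eq:prop21-U}. Since $\|C(t)G(t,r)\|\le c\,(t-r)^{-1}$ is not integrable at $r=t$, one must use Tanabe's diagonal subtraction, splitting the integrand as $C(t)G(t,r)\big(R(r,s)-R(t,s)\big)+C(t)G(t,r)R(t,s)$: the first piece becomes integrable once the H\"older continuity $\|R(r,s)-R(t,s)\|\le c\,(t-r)^{\eta}$ of the kernel is available, and the second is resummed using the smoothing action of the holomorphic semigroup. The delicate point hidden here is establishing that kernel regularity: the differences of $R_1$ carry an \emph{unbounded} factor $A^{\ga}$ coming from the relative-boundedness gap in (A2), so one must route the fractional powers $A^{\pm\ga}$ through the self-adjoint holomorphic smoothing in order to bring the two-sided H\"older estimate (A3) to bear. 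Carrying out this routing with all constants tracked in $(t,s)$ is exactly the quantitative information later needed for Theorem~\ref{thm:1.1}.
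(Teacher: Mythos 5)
The paper offers no proof of Proposition~\ref{prop:2.1} at all: it is recalled as a known result with citations to \cite{Sob1961} (Theorem 1) and \cite{Tanabe1979} (Theorem 5.2.1), and your parametrix construction --- frozen-coefficient semigroup $e^{-(t-s)C(s)}$, Neumann series for the weakly singular defect kernel $R_1$, Tanabe's diagonal subtraction for the regularity in part (a), and the standard pairing argument for uniqueness in part (b) --- is precisely the method behind those citations, so your route and the paper's coincide. The one point where the present hypotheses genuinely exceed the cited theorems, and which you correctly single out as the obstacle, is that Kato--Tanabe--Sobolevskii assume a \emph{one-sided} H\"older condition of the type $\|(C(t)-C(s))C(r)^{-1}\|\le L|t-s|^{\gb}$, whereas (A3) is only sandwiched between two factors $A^{-\ga}$; making your ``routing'' precise requires the Hilbert-space structure of (A1) --- e.g. the Heinz inequality $\|A^{\ga}C(s)^{-\ga}\|\le 1$ to trade $A^{\ga}$-smoothing against the holomorphic smoothing of $e^{-\tau C(s)}$, and a Stein-type interpolation using the unitarity of the imaginary powers $A^{iy}$ to combine the bounded estimate \eqref{eq:1.1a} with the H\"older estimate \eqref{eq:1.1-1} and tame the left-dangling factor $A^{\ga}$ in differences of $R_1$ --- which is exactly how the Hilbert-space literature the paper leans on for this assertion (\cite{Yagi1989}, \cite{IT1998}) completes the step, so your sketch, though it leaves that computation unexecuted, identifies the right remedy.
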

Note that solution of \eqref{eq:1.1} is called \textit{classical} if
$u(t) \in C([0,T], \mathfrak{H})\cap C^1([0,T], \mathfrak{H})$, $u(t) \in \dom(C(t))$, $u(s) = u_s$,
and $C(t)u(t) \in C([0,T], \mathfrak{H})$ for all $t\geq s$, with convention that
$(\partial_{t}u)(s)$ is the right-derivative, see, e.g., \cite{Sob1961}, Theorem 1, or
\cite{EngNag2000}, Ch.VI.9.

Since the involved into {(A1), (A2)} operators are non-negative and self-adjoint,
equation (\ref{eq:2.1}) implies that the solution operator consists of \emph{contractions}:
\begin{equation}\label{eq:2.2}
\partial_{t}\|U(t,s) u\|^2 = - 2 (C(t)U(t,s)u,U(t,s)u) \leq 0, \ \ {\rm{for}}  \ \ u\in \mathfrak{H}.
\end{equation}

By (A1) $G(t) = e^{- t A}: \mathfrak{H} \rightarrow \dom(A)$. Applying to (\ref{eq:2.1})
the \textit{variation of constants} argument we obtain for $U(t,s)$ the integral equation :
\begin{equation}\label{eq:2.3}
U(t,s) = G(t-s) - \int_{s}^{t} d\tau \ G(t-\tau)\, B(\tau)\, U(\tau,s),
\quad U(s,s) = \mathds{1}\ .
\end{equation}
Hence evolution family $\{U(t,s)\}_{(t,s)\in\gD}$, which is  defined by equation (\ref{eq:2.3}),
can be considered as a \textit{mild} solution of nACP (\ref{eq:2.1}) for $0 \leq s \leq t \leq T$
in the Banach space $\mathcal{L}(\mathfrak{H})$ of bounded operators, cf. \cite{EngNag2000}, Ch.VI.7.

Note that assumptions {(A1)-(A4)} yield
for $0 \leq s < t \leq T$, $\tau \in (s,t)$ and for the closure $\overline{A^{-\alpha} B(\tau)}$:
\begin{equation}\label{eq:2.4}
\|G(t-s) A^{\alpha}\|_1 \leq \frac{M_\alpha}{(t - \tau)^{\alpha}} \ \ \
{\rm{and}} \ \ \ \|\overline{A^{-\alpha} B(\tau)}\| \leq C_\alpha \ .
\end{equation}
Then (\ref{eq:2.2}), (\ref{eq:2.4}) give the trace-norm estimate
\begin{equation}\label{eq:2.5}
\left\|\int_{s}^{t} d\tau \ G(t-\tau)\, B(\tau)\, U(\tau,s)\right\|_1 \leq
\frac{{M_\alpha}C_\alpha }{1- \alpha}{(t - s)^{1- \alpha}} \ ,
\end{equation}
and by (\ref{eq:2.3}) we ascertain that $\{U(t,s)\}_{(t,s)\in\gD} \in \mathcal{C}_{1}(\mathfrak{H})$
for $t>s$.

Therefore, we can construct solution operator $\{U(t,s)\}_{(t,s)\in\gD}$ as a \textit{trace-norm}
convergent Dyson-Phillips series  $\sum_{n=0}^\infty S_n(t,s)$ by {iteration} of the integral formula
(\ref{eq:2.3}) for $t>s$. To this aim we define the recurrence relation
\begin{equation}
\begin{split}
  &  S_{0}(t,s) = U_A(t-s), \\
  &  S_{n}(t,s) = -\int_{s}^t
  \!\mathrm{d}s\, G(t-\tau)\, B(\tau)\, S_{n-1}(\tau, s) , \quad n \geq 1.
\end{split}
  \label{3.49}
\end{equation}
Since in (\ref{3.49}) the operators $S_{n \geq 1}(t,s)$ are the $n$-fold
trace-norm convergent Bochner integrals
\begin{multline}\label{3.49-1}
  S_{n}(t,s)
  = \int_{s}^t \!\mathrm{d}\tau_1 \int_{s}^{\tau_1} \!\mathrm{d}\tau_2 \ldots
\int_{s}^{\tau_{n-1}} \!\mathrm{d}\tau_n \\
    G(t-\tau_1)(-B(\tau_1))G(\tau_1-\tau_2)\cdots
   G(\tau_{n-1}-\tau_n)(-B(\tau_n))G(\tau_n - s) ,
\end{multline}
by contraction property (\ref{eq:2.2}) and by estimate (\ref{eq:2.5}) there exit $0 \leq s \leq t$
such that ${{M_\alpha}C_\alpha }{(t - s)^{1- \alpha}}/{(1- \alpha)} =: \xi <1$ and
\begin{equation}\label{3.50-1}
\|S_{n}(t,s)\|_1 \leq \xi^n \ ,    \quad n \geq 1 .
\end{equation}
Consequently $\sum_{n=0}^\infty S_n(t,s)$ converges for $t>s$ in the trace-norm and satisfies the
integral equation (\ref{eq:2.3}). Thus we get for solution operator of nACP the representation
\begin{equation}
U(t,s)= \sum_{n=0}^\infty S_n(t,s) \, .
\label{3.48}
\end{equation}
This result can be extended to any $0 \leq s < t \leq T$ using (\ref{eq:1.2}).

We note that for $s \leq t$ the above arguments yield the proof of assertions in the next
Theorem \ref{thm:2.1} and Corollary \ref{cor:2.1}, but only in the \textit{strong}
(\cite{Yagi1989} Proposition 3.1, main Theorem in \cite{VWZ2009}) and in the
\textit{operator-norm} topology, \cite{IT1998} Lemma 2.1.
While for $t>s$ these arguments prove a generalisation of Theorem \ref{thm:2.1} and
Corollary \ref{cor:2.1} to the \textit{trace-norm} topology in  Banach space
$\mathcal{C}_{1}(\mathfrak{H})$:
\begin{thm}\label{thm:2.1}
Let assumptions \emph{(A1)-(A4)} be satisfied.
Then evolution family $\{U(t,s)\}_{(t,s)\in\gD}$ \emph{(\ref{3.48})}
gives for $t>s$ a mild trace-norm continuous solution of nACP \emph{(\ref{eq:2.1})} in
Banach space $\mathcal{C}_{1}(\mathfrak{H})$.
\end{thm}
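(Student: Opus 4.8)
The plan is to consolidate the estimates and the Dyson--Phillips construction already assembled above into a verification of the two asserted properties---that $\{U(t,s)\}$ is a \emph{mild} solution and that it is \emph{trace-norm continuous} for $t>s$. First I would confirm the mild-solution property: by the recurrence \eqref{3.49} each partial sum $\sum_{n=0}^{N}S_n(t,s)$ satisfies the integral equation \eqref{eq:2.3} up to the tail $-\int_s^t\!d\tau\,G(t-\tau)B(\tau)S_N(\tau,s)$, and since \eqref{3.50-1} gives $\xi<1$ for $t-s$ small the series converges in $\|\cdot\|_1$ with the tail vanishing, so the limit $U(t,s)=\sum_{n\ge0}S_n(t,s)$ solves \eqref{eq:2.3} in $\cC_1(\gotH)$. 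Trace-class membership for such $t>s$ is then immediate from the geometric bound, each $S_n(t,s)$ lying in $\cC_1(\gotH)$ through the factorisation $G(t-\tau_1)=\bigl(G(t-\tau_1)A^\ga\bigr)A^{-\ga}$ combined with \eqref{eq:2.4}.

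To reach \emph{arbitrary} $0\le s<t\le T$ I would use the cocycle law \eqref{eq:1.2}: fixing an intermediate $r\in(s,t)$ with $t-r$ small enough to force $\xi<1$, one writes $U(t,s)=U(t,r)U(r,s)$, where $U(t,r)\in\cC_1(\gotH)$ by the local argument while $U(r,s)$ is a contraction by \eqref{eq:2.2}; the $\ast$-ideal property of $\cC_1(\gotH)$ in $\cL(\gotH)$ then yields $U(t,s)\in\cC_1(\gotH)$.

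The substantive step is trace-norm continuity of $(t,s)\mapsto U(t,s)$ on $\{0\le s<t\le T\}$. The zeroth term $S_0(t,s)=G(t-s)$ is trace-norm continuous for $t>s$, this being the characteristic regularity of a Gibbs semigroup under (A4). For $n\ge1$ I would estimate $\|S_n(t',s')-S_n(t,s)\|_1$ directly from the Bochner representation \eqref{3.49-1}, isolating in each summand a single factor $G(t-\tau_1)A^\ga$ to carry the trace norm and controlling all remaining factors in operator norm via \eqref{eq:2.4} and the contraction bound \eqref{eq:2.2}; continuity then follows by dominated convergence for Bochner integrals, the dominating function being integrable because the sole singularity is $(t-\tau_1)^{-\ga}$ with $\ga<1$, and H\"older continuity (A3) supplying the modulus for the $\tau$-dependence of $B(\tau)$. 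Since \eqref{3.50-1} makes $\sum_n S_n$ converge uniformly in $\|\cdot\|_1$ on compact subsets of $\{t>s\}$, continuity of the terms transfers to the sum.

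The main obstacle I anticipate is precisely this trace-norm continuity near the diagonal: because $\|G(\e)\|_1\to\infty$ as $\e\downarrow0$, the estimates must be arranged so that, for fixed $t>s$, small displacements of $(t,s)$ stay uniformly bounded away from the singular regime $\tau\to t$. The factorisation $G(t-\tau)B(\tau)=\bigl(G(t-\tau)A^\ga\bigr)\bigl(A^{-\ga}B(\tau)\bigr)$ is what makes this tractable, separating the singular-but-trace-class semigroup factor from the merely bounded perturbation so that each can be estimated on its own terms.
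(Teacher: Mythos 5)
Your proposal takes essentially the same route as the paper, whose proof of Theorem~\ref{thm:2.1} is exactly the preceding Dyson--Phillips construction \eqref{3.49}--\eqref{3.48}: the factorization $G(t-\tau)B(\tau)=\bigl(G(t-\tau)A^{\alpha}\bigr)\bigl(\overline{A^{-\alpha}B(\tau)}\bigr)$ with bounds \eqref{eq:2.4}--\eqref{eq:2.5} yields the local geometric trace-norm bound \eqref{3.50-1}, convergence of the series to a solution of the integral equation \eqref{eq:2.3}, and extension to all $0\le s<t\le T$ via the cocycle law \eqref{eq:1.2} and the $\ast$-ideal property, just as you argue. Your dominated-convergence treatment of trace-norm continuity is a welcome refinement the paper leaves implicit; only note that \eqref{3.50-1} gives the geometric bound solely for $t-s$ small, so uniform convergence on all compacts of $\{t>s\}$ again requires your cocycle splitting rather than \eqref{3.50-1} alone.
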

\begin{cor}\label{cor:2.1}
For $t>s$ the evolution family $\{U(t,s)\}_{(t,s)\in\gD}$ \emph{(\ref{3.48})} is a strict solution
of the nACP \emph{:}
\be\la{eq:2.1a}
\begin{split}
\partial_{t} U(t,s) = & - C(t) U(t,s),
\ \ t \in (s,T)  \quad {\rm{and}}  \quad U(s,s) = \mathds{1},\\
C(t) := & \, A + B(t),
\end{split}
\quad (s,T) \subset [0,T],
\ee
in Banach space $\mathcal{C}_{1}(\mathfrak{H})$.
\end{cor}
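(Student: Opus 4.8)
The plan is to upgrade the operator-norm strict-solution statement—already available for $t>r$ from the parabolic theory underlying Proposition~\ref{prop:2.1} (see \cite{IT1998}, Lemma~2.1, \cite{Tanabe1979}, Theorem~5.2.1, and \cite{Sob1961})—to the trace-norm topology by factoring the evolution family through an intermediate time and exploiting the trace-ideal inequality $\|XY\|_1\le\|X\|\,\|Y\|_1$. Fix $0\le s<t\le T$ and choose any intermediate point $r\in(s,t)$. By the evolution property \eqref{eq:1.2} one has $U(t,s)=U(t,r)\,U(r,s)$, and by Theorem~\ref{thm:2.1} the factor $U(r,s)$ belongs to $\mathcal{C}_{1}(\mathfrak{H})$ precisely because $r>s$. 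This fixed trace-class factor is what carries all the smoothing.

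First I would record the trace-class regularity of $C(t)U(t,s)$. Writing $U(t,s)=U(t,r)U(r,s)$ and using $\ran U(t,r)\subseteq\dom(A)\subseteq\dom(B(t))$ (Remark~\ref{rem:1.1}(d)), I get $C(t)U(t,s)=\big(AU(t,r)+B(t)U(t,r)\big)U(r,s)$. The operator $AU(t,r)$ is bounded for $t>r$ by parabolic smoothing, while $B(t)U(t,r)=\big(B(t)A^{-\alpha}\big)\big(A^{\alpha}U(t,r)\big)$ is bounded by (A2) together with the standard bound $\|A^{\alpha}U(t,r)\|\le \mathrm{const}\,(t-r)^{-\alpha}$; hence the bracket is a bounded operator. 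Multiplying on the right by the fixed trace-class operator $U(r,s)$ and invoking $\|XY\|_1\le\|X\|\,\|Y\|_1$ shows $C(t)U(t,s)\in\mathcal{C}_{1}(\mathfrak{H})$ for every $t>s$.

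Next I would establish trace-norm differentiability. For $t>r$ the map $t\mapsto U(t,r)$ is, by the cited operator-norm strict-solution result, continuously differentiable in $\mathcal{L}(\mathfrak{H})$ with $\partial_t U(t,r)=-C(t)U(t,r)$, the right-hand side being operator-norm continuous in $t$. For $h$ so small that $t+h>r$, the evolution property gives
\[
\frac{U(t+h,s)-U(t,s)}{h}=\frac{U(t+h,r)-U(t,r)}{h}\,U(r,s).
\]
As $h\to0$ the difference quotient converges to $-C(t)U(t,r)$ in operator norm; right-multiplication by the fixed $U(r,s)\in\mathcal{C}_{1}(\mathfrak{H})$ and the ideal inequality then yield convergence in the trace norm to $-C(t)U(t,r)U(r,s)=-C(t)U(t,s)$. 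The same factorisation applied to the operator-norm-continuous map $t\mapsto C(t)U(t,r)$ gives trace-norm continuity of $t\mapsto C(t)U(t,s)$ on $(s,T)$. Since the limit is manifestly independent of the admissible $r$, this proves that $t\mapsto U(t,s)$ is a $\mathcal{C}_{1}(\mathfrak{H})$-valued $C^1$ function on $(s,T)$ solving \eqref{eq:2.1a}, with the initial datum $U(s,s)=\mathds{1}$ attained only in the strong sense as $t\downarrow s$ (necessarily so, since $\mathds{1}\notin\mathcal{C}_{1}(\mathfrak{H})$).

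I expect the essential point—and the only place where genuine analysis enters—to be the availability of the operator-norm strict differentiability of $t\mapsto U(t,r)$ on $(r,T)$ and of the smoothing bounds $\|AU(t,r)\|$ and $\|A^{\alpha}U(t,r)\|\,(t-r)^{\alpha}=O(1)$; these encode the singularity of $AG(t-\tau)$ at $\tau=t$ that is tamed, in the underlying parabolic construction, by the H\"older continuity (A3). Once these are in hand the passage to $\mathcal{C}_{1}(\mathfrak{H})$ is purely algebraic: the intermediate time $r$ supplies the fixed Gibbs-type smoothing factor $U(r,s)\in\mathcal{C}_{1}(\mathfrak{H})$, and the trace-ideal inequality transfers every operator-norm statement on $(r,T)$ into its trace-norm counterpart.
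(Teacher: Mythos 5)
Your proposal is correct, and it rests on the same core mechanism as the paper's own proof: factor the evolution family at an intermediate time so that a fixed trace-class factor, supplied by Theorem \ref{thm:2.1}, absorbs the limit through the ideal inequality $\|XY\|_1\le\|X\|\,\|Y\|_1$. The execution differs enough to be worth comparing. The paper splits at the moving time $t$ itself, working with $U(t+\delta,t)U(t,s)$, and derives the trace-norm derivative from the \emph{strong} differentiability of Proposition \ref{prop:2.1} combined with trace-norm continuity of $\delta\mapsto U(t+\delta,t)U(t,s)$ and the range inclusion $\ran(U(t,s))\subseteq\dom(A)$; the passage from a strong derivative to a trace-norm one is left largely implicit there. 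You instead fix $r\in(s,t)$ and import the \emph{operator-norm} strict-solution statement for $t\mapsto U(t,r)$ from the parabolic literature (\cite{IT1998}, Lemma 2.1 --- the same operator-norm result the paper itself invokes just before Theorem \ref{thm:2.1}), after which the trace-norm convergence of the difference quotient is immediate; this handles two-sided increments uniformly and makes explicit the smoothing bounds $\|AU(t,r)\|=O((t-r)^{-1})$ and $\|A^{\alpha}U(t,r)\|=O((t-r)^{-\alpha})$ used to show $C(t)U(t,s)\in\mathcal{C}_{1}(\mathfrak{H})$, so your write-up actually shores up the tersest step of the paper's argument. One point to watch: operator-norm continuity of $t\mapsto B(t)U(t,r)$ does not follow from (A2)--(A3) alone, since (A3) controls only $A^{-\alpha}B(\cdot)A^{-\alpha}$ and not $B(\cdot)A^{-\alpha}$; you should attribute the norm continuity of $t\mapsto C(t)U(t,r)$ to the cited operator-norm parabolic theory (where it is established) rather than to the assumptions directly. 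Your closing observation that the initial datum $U(s,s)=\mathds{1}$ can only be attained strongly, because $\mathds{1}\notin\mathcal{C}_{1}(\mathfrak{H})$, is correct and is a point the paper leaves implicit by restricting to $C((s,T],\mathcal{C}_{1}(\mathfrak{H}))$.
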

\begin{proof}
Since by Remark \ref{rem:1.1}(c),(d) the function $t \mapsto U(t,s)$ for $t\geq s$ is strongly
continuous and since $U(t,s) \in \mathcal{C}_{1}(\mathfrak{H})$ for $t > s$, the product
$U(t+\delta,t)U(t,s)$ is continuous in the trace-norm topology for $|\delta|< t-s$ .
Moreover, since $\{u(t)\}_{s\leq t \leq T}$ is a classical solution of nACP (\ref{eq:1.1}),
equation (\ref{eq:2.1}) implies that $U(t,s)$ has strong derivative for any $t > s$. Then again
by Remark \ref{rem:1.1}(d) the trace-norm continuity of $\delta \mapsto U(t+\delta,t)U(t,s)$ and
by inclusion of ranges: ${\rm{ran}}(U(t,s))\subseteq \dom(A)$ for $t > s$, the trace-norm derivative
$\partial_{t} U(t,s)$ at $t(>s)$ exists and belongs to $\mathcal{C}_{1}(\mathfrak{H})$.

Therefore,
$U(t,s) \in C((s,T], \mathcal{C}_{1}(\mathfrak{H}))\cap C^1((s,T], \mathcal{C}_{1}(\mathfrak{H}))$
with $U(s,s) = \mathds{1}$ and $U(t,s)\in \mathcal{C}_{1}(\mathfrak{H})$,
$C(t)U(t,s) \in \mathcal{C}_{1}(\mathfrak{H})$ for $t>s$, which means that solution $U(t,s)$
of (\ref{eq:2.1a}) is \textit{strict}, cf. \cite{Yagi1989} Definition 1.1.
\hfill $\Box$ \end{proof}

We note that these results for ACP in Banach space $\mathcal{C}_{1}(\mathfrak{H})$ are well-known
for Gibbs semigroups, see \cite{Zag2019}, Chapter 4.

Now, to proceed with the proof of Theorem \ref{thm:1.1} about trace-norm convergence of the solution
operator approximants (\ref{eq:1.6}) we need the following preparatory lemma.
\begin{lem}\label{lem:2.1}
  Let self-adjoint positive operator $A$ be
  such that $e^{-t A} \in {\cal C}_1({\mathfrak{H}})$ for $t > 0$, and
  let $V_1,V_2,\ldots,V_n$ be bounded operators $\mathcal{L}(\mathfrak{H})$. Then
  \begin{equation}
    \Bigl\| \prod_{j=1}^n V_je^{-t_jA} \Bigr\|_1
    \leq
    \prod_{j=1}^n \|V_j\| \|e^{-(t_1+t_2+ \ldots +t_n)A/4}\|_1 \ ,
    \label{3.60}
  \end{equation}
for any set $\{t_1,t_2,\ldots,t_n\}$ of positive numbers.
\end{lem}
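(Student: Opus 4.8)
The plan is to prove the slightly stronger inequality
\[
\Bigl\|\prod_{j=1}^n V_je^{-t_jA}\Bigr\|_1\le\prod_{j=1}^n\|V_j\|\,\bigl\|e^{-SA}\bigr\|_1,\qquad S:=t_1+\cdots+t_n,
\]
and then to read off the stated bound from the elementary monotonicity $\|e^{-SA}\|_1\le\|e^{-SA/4}\|_1$. The single tool I would use is the generalized H\"older inequality for the Schatten--von Neumann ideals: if $X_1,\dots,X_m\in\cL(\gotH)$ and $p_1,\dots,p_m\in[1,\infty]$ satisfy $\sum_k p_k^{-1}=1$, then $\|X_1\cdots X_m\|_1\le\prod_k\|X_k\|_{p_k}$, where $\|\cdot\|_\infty$ is the operator norm.

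First I would view the product as a string of $2n$ factors $V_1,e^{-t_1A},\dots,V_n,e^{-t_nA}$. To each bounded factor $V_j$ I assign the exponent $\infty$, so that it contributes $\|V_j\|$ and adds $0$ to the exponent sum; to the $j$-th semigroup factor I assign the Schatten exponent $q_j:=S/t_j$. Since $t_j\le S$ one has $q_j\ge1$, and $\sum_{j=1}^n q_j^{-1}=\sum_{j=1}^n t_j/S=1$, so H\"older is applicable. As $A$ is positive, each $e^{-t_jA}$ is positive and $(e^{-t_jA})^{q_j}=e^{-q_jt_jA}=e^{-SA}$, whence
\[
\|e^{-t_jA}\|_{q_j}=\bigl(\mathrm{Tr}\,e^{-q_jt_jA}\bigr)^{1/q_j}=\bigl(\mathrm{Tr}\,e^{-SA}\bigr)^{1/q_j}.
\]
Multiplying the $2n$ factors and using $\sum_j q_j^{-1}=1$ gives $\prod_j\|e^{-t_jA}\|_{q_j}=(\mathrm{Tr}\,e^{-SA})^{\sum_j 1/q_j}=\|e^{-SA}\|_1$, which is exactly the displayed stronger bound.

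To finish, I would note that $e^{-sA}\in\mathcal{C}_1(\gotH)$ is compact, so $A$ has discrete spectrum $\{\lambda_k\}$ and $s\mapsto\|e^{-sA}\|_1=\sum_k e^{-s\lambda_k}$ is decreasing; since $S/4\le S$ this yields $\|e^{-SA}\|_1\le\|e^{-SA/4}\|_1$, and the lemma follows with room to spare.

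\textbf{Main obstacle.} The only genuinely load-bearing step is the legitimacy of the H\"older estimate, i.e.\ the finiteness of the assigned norms $\|e^{-t_jA}\|_{q_j}=(\mathrm{Tr}\,e^{-SA})^{1/q_j}$. This is precisely where the Gibbs hypothesis enters: $e^{-sA}\in\mathcal{C}_1(\gotH)$ for $s>0$ guarantees $\mathrm{Tr}\,e^{-SA}<\infty$ for $S>0$, while the degenerate cases $n=1$ (where $q_1=1$) and $t_j=S$ are covered automatically. I should stress that the constant $1/4$ is not optimal for this argument --- the clean H\"older route already delivers the full time $S$ --- so the factor appearing in the statement is merely a convenient slack, of the kind that a more elementary two-sided Cauchy--Schwarz estimate based on $\|XY\|_1\le\|X\|_2\|Y\|_2$ and $\|e^{-sA}\|_2^2=\|e^{-2sA}\|_1$ would produce.
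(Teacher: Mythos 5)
Your proof is correct, and it takes a genuinely different route from the paper. The paper first treats compact $V_j$: it rounds each $t_j$ down to a dyadic multiple $2^{\ell_j}t_m$ of the minimal time $t_m$, applies the Horn weak-majorization inequality $\sum_k s_k\bigl(\prod_j X_j\bigr)\le \sum_k \prod_j s_k(X_j)$ to reduce everything to powers of $s_k(e^{-t_mA})$, converts the result into a Schatten $N$-norm with $N=\sum_j 2^{\ell_j}$, rounds $N$ down to a power of two $2^p$, and only then lands on $\|e^{-TA/4}\|_1$ --- the factor $1/4$ being exactly the cost of the two dyadic roundings; general bounded $V_j$ are then reached by regularizing $\tilde V_j=V_je^{-\varepsilon A}$ and letting $\varepsilon\downarrow 0$ using trace-norm continuity of the Gibbs semigroup. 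Your argument replaces all of this bookkeeping by a single application of the generalized H\"older inequality in the Schatten ideals with exponents $q_j=S/t_j$ (and $\infty$ for the $V_j$), using that $e^{-t_jA}\ge 0$ gives
\begin{equation*}
\|e^{-t_jA}\|_{q_j}=\bigl(\mathrm{Tr}\,e^{-q_jt_jA}\bigr)^{1/q_j}=\bigl(\mathrm{Tr}\,e^{-SA}\bigr)^{1/q_j},
\qquad \sum_{j=1}^n q_j^{-1}=1 .
\end{equation*}
This buys three things: the stronger bound $\prod_j\|V_j\|\,\|e^{-SA}\|_1$ with the full time $S$ in place of $S/4$ (the paper's $1/4$ is then recovered trivially from $\|e^{-SA}\|_1\le\|e^{-3SA/4}\|\,\|e^{-SA/4}\|_1\le\|e^{-SA/4}\|_1$, since $A\ge 0$ makes $e^{-3SA/4}$ a contraction); no separate compactness step, since H\"older with exponent $\infty$ handles bounded $V_j$ directly, so the $\varepsilon$-regularization limit disappears; and a visibly sharp accounting of where the Gibbs hypothesis enters, namely the finiteness of $\mathrm{Tr}\,e^{-SA}$, which makes each $\|e^{-t_jA}\|_{q_j}$ finite. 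The only points worth making explicit in a write-up are the ones you implicitly rely on: the multi-factor H\"older inequality for non-integer exponents (standard, by induction from the two-factor case, e.g.\ Simon's \emph{Trace Ideals}), and the functional-calculus identity $(e^{-t_jA})^{q_j}=e^{-q_jt_jA}$ for real $q_j\ge 1$, both of which are unproblematic for a positive self-adjoint $A$. Since the lemma is used in the paper only through the finiteness of $c(t-s)=\|e^{-(t-s)A/2}\|_1$, your sharper constant changes nothing downstream, but it is the cleaner statement.
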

\begin{proof}
At first we prove this assertion for \textit{compact} operators: $V_j \in {\cal C}_\infty({\mathfrak{H}})$,
$j = 1,2,\ldots,n$. Let $t_m : = \min\{t_j\}_{j=1}^n >
0$ and $T : = \sum_{j=1}^n t_j > 0$. For any $1 \leq j \leq n$, we
define an integer $\ell_j \in \mathds{N}$ by
\begin{equation*}
  2^{\ell_j}t_m \leq t_j \leq 2^{\ell_j+1}t_m.
\end{equation*}
Then we get $\sum_{j=1}^n 2^{\ell_j}t_m > T/2$ and
\begin{equation}
\prod_{j=1}^n V_je^{-t_jA} = \prod_{j=1}^n V_j e^{-(t_j - 2^{\ell_j}t_m)A}(e^{-t_mA})^{\ell_j}.
\label{3.61}
\end{equation}
By the definition of the $\|\cdot\|_1$-norm and by inequalities
for singular values $\{s_k (\cdot)\}_{k\geq 1}$ of compact operators
\begin{align}
  \Bigl\| \prod_{j=1}^n V_je^{-t_jA} \Bigr\|_1
  &= \sum_{k=1}^\infty s_k\bigl( \prod_{j=1}^n V_je^{-(t_j -
  2^{\ell_j}t_m)A}(e^{-t_mA})^{2^{\ell_j}} \bigr)
\nonumber\\&
  \leq
  \sum_{k=1}^\infty \prod_{j=1}^n s_k\left(e^{-(t_j -
  2^{\ell_j}t_m)A}\right)
  \left[s_k(e^{-t_ma})\right]^{2^{\ell_j}}s_k(V_j)
\nonumber\\&
  \leq \sum_{k=1}^\infty s_k(e^{-t_mA})^{\sum_{j=1}^n2^{\ell_j}}
  \prod_{j=1}^n \|V_j\| \, .
\label{3.62}
\end{align}
Here we used that $s_k(e^{-(t_j - 2^{\ell_j}t_m)A}) \leq \|e^{-(t_j
- 2^{\ell_j}t_m)A}\| \leq 1$ and that $s_k(V_j) \leq \|V_j\|$. Let
$N : = \sum_{j=1}^n 2^{\ell_j}$ and $T_m : = Nt_m > T/2$. Since
\begin{equation*}
\sum_{k=1}^\infty s_k(e^{-t A/q})^{q} = (\|e^{-t A/q}\|_{q})^q \ ,
\end{equation*}
the inequality (\ref{3.62}) yields for $q = N$: 
\begin{equation}
 \Bigl\| \prod_{j=1}^n V_j e^{-t_jA} \Bigr\|_1 \leq
 \Bigl( \bigl\| e^{-T_mA/N} \bigr\|_{N} \Bigr)^N \prod_{j=1}^n \|V_j\|.
\label{3.63}
\end{equation}

Now we consider an integer $p \in \mathds{N}$ such that $2^p \leq N < 2^{p+1}$. It then follows
that $T/4 < T_m/2 < 2^pT_m/N$, and hence we obtain
\begin{align}\label{3.64}
 & \Bigl( \bigl\|e^{-T_mA/N} \bigr\|_{q=N} \Bigr)^N =
  \sum_{k=1}^\infty s_k^N(e^{-T_mA/N}) \\
& \leq  \sum_{k=1}^\infty s_k^{2^p}(e^{-2^pT_mA/2^pN}) \leq \sum_{k=1}^\infty
s_k^{2^p}(e^{-TA/2^{p+2}}) = \|e^{-TA/2^{2}}\|_1 \nonumber \ ,
\end{align}
where we used that $s_k(e^{-T_mA/N}) = s_k(e^{-2^pT_mA/2^pN}) \leq
\|e^{-T_mA/N}\| \leq 1$, and that $s_k(e^{-(t + \tau)A}) \leq
\|e^{-tA}\|s_k(e^{-\tau A}) \leq s_k(e^{-\tau A})$ for any $t,\tau
> 0$. Therefore, the estimates (\ref{3.63}), (\ref{3.64}) give the bound (\ref{3.60}).

Now, let $V_j \in {\cal L}({\mathfrak{H}})$, $j = 1,2,\ldots,n$,
and set $\tilde V_j : = V_je^{-\varepsilon A}$ for $0 <
\varepsilon < t_m$. Hence, $\tilde V_j \in {\cal
C}_1({\mathfrak{H}}) \subset \mathcal{C}_{\infty}(\mathfrak{H})$ and
$s_k(\tilde V_j) \leq \|\tilde V_j\| \leq \|V_j\|$. If we set $\tilde t_j : = t_j - \varepsilon$, then
\begin{equation}
  \Big\|\prod_{j=1}^n V_je^{-t_jA}\Big\|_1 \leq \prod_{j=1}^n
  \|V_j\| \|e^{-(\tilde t_1 + \tilde t_2 + \cdots + \tilde
  t_n) A/4}\|_1.
\label{3.65}
\end{equation}
Since the semigroup $\{e^{-t A}\}_{t \geq 0}$ is
$\|\cdot\|_1$-continuous for $t > 0$, we can take in
(\ref{3.65}) the limit $\varepsilon \downarrow 0$. This gives
the result (\ref{3.60}) in general case.
\hfill $\Box$ \end{proof}
\section{Proof of Theorem \ref{thm:1.1}}\la{sec:III}
\noindent
We follow the line of reasoning of the \textit{lifting} lemma developed in \cite{Zag2019}, Ch.5.4.1.

1. By virtue of (\ref{eq:1.2}) and (\ref{eq:1.6}) we obtain for difference in (\ref{eq:1.11})
formula:
\begin{equation}\label{eq:3.1}
U_n(t,s) - U(t,s) = \prod_{k=n}^{1}W_{k}^{(n)}(t,s) -
\prod_{l=n}^{1}U(t_{l+1} ,t_{l}) .
\end{equation}
Let integer $k_{n} \in (1, n)$. Then (\ref{eq:3.1}) yields the representation:
\begin{eqnarray*}
&&{{U_n(t,s) - U(t,s) =}}
\left(\prod_{k=n}^{k_n +1}W_{k}^{(n)}(t,s) - \prod_{l=n}^{k_n +1}U(t_{l+1} ,t_{l})\right)
\prod_{k=k_n}^{1}W_{k}^{(n)}(t,s) \nonumber \\
&&+ \prod_{l=n}^{k_n +1}U(t_{l+1} ,t_{l})\left(\prod_{k=k_n}^{1}W_{k}^{(n)}(t,s)-
\prod_{l=k_n}^{1}U(t_{l+1} ,t_{l})\right),
\nonumber
\end{eqnarray*}
which implies the trace-norm estimate
\begin{eqnarray}
&&\|U_n(t,s) - U(t,s)\|_1 \leq
\left\|\prod_{k=n}^{k_n +1}W_{k}^{(n)}(t,s) - \prod_{l=n}^{k_n +1}U(t_{l+1} ,t_{l})\right\|
\left\|\prod_{k=k_n}^{1}W_{k}^{(n)}(t,s)\right\|_1 \nonumber \\
&&+ \left\|\prod_{l=n}^{k_n +1}U(t_{l+1} ,t_{l})\right\|_1 \left\| \prod_{k=k_n}^{1}W_{k}^{(n)}(t,s)-
\prod_{l=k_n}^{1}U(t_{l+1} ,t_{l})\right\|. \la{eq:3.1a}
\end{eqnarray}

2. Now we assume that $\lim_{n \rightarrow \infty}k_{n}/n = 1/2 $. Then (\ref{eq:1.6a}) yields
$\lim_{n\rightarrow\infty} t_{k_{n}} = (t+s)/2$, $\lim_{n \rightarrow \infty} t_{{n}} = t$ and
uniform estimates (\ref{eq:1.8v})-(\ref{eq:1.10}) with the bound
$R_{\alpha,\beta}\varepsilon_{\alpha,\beta}(n)$ imply
\begin{eqnarray}\la{eq:3.2a}
&&\esssup_{(t,s)\in \gD}\left\|\prod_{k=n}^{k_n +1}W_{k}^{(n)}(t,s) -
U(t,(t+s)/2)\right\| \le R^{(1)}_{\alpha,\beta}\varepsilon_{\alpha,\beta}(n) , \\
&&\esssup_{(t,s)\in \gD}\left\| \prod_{k=k_n}^{1}W_{k}^{(n)}(t,s)-
U((t+s)/2, s)\right\| \le R^{(2)}_{\alpha,\beta}\varepsilon_{\alpha,\beta}(n) ,\la{eq:3.2b}
\end{eqnarray}
for $n \in \dN$ and for some constants $R^{(1,2)}_{\alpha,\beta} > 0$.

3. Since $\lim_{n \rightarrow \infty}k_{n}/n = 1/2 $ and $t > s$, by definition (\ref{eq:1.6}) and by
Lemma \ref{lem:2.1} for contractions $\{V_k = e^{-\tfrac{t-s}{n} B(t_{k})}\}_{k=1}^n$ there
exists $a_1 > 0$ such that
\begin{equation}\label{eq:3.3a}
\left\|\prod_{k=k_n}^{1}W_{k}^{(n)}(t,s)\right\|_1 =
\left\|\prod_{k=k_n}^{1} e^{-\tfrac{t-s}{n} A}e^{-\tfrac{t-s}{n} B(t_{k})}\right\|_1 \leq
a_1 \ \|e^{-\tfrac{t-s}{2} A}\|_1 \ .
\end{equation}
Similarly there is $a_2 > 0$ such that
\begin{equation}\label{eq:3.3b}
\left\|\prod_{l=n}^{k_n +1}U(t_{l+1} ,t_{l})\right\|_1 \leq a_2 \ \|e^{-\tfrac{t-s}{2} A}\|_1 \ .
\end{equation}

4. Since for $t > s$ the trace-norm $c(t-s): = \|e^{-\tfrac{t-s}{2} A}\|_1 < \infty$, by
(\ref{eq:3.1a})-(\ref{eq:3.3b}) we obtain the proof of the estimate (\ref{eq:1.11}) for
\begin{equation}\label{eq:3.4}
R_{\alpha,\beta}(t,s) := (a_1 R^{(1)}_{\alpha,\beta} + a_2 R^{(2)}_{\alpha,\beta}) \, c(t-s) \ ,
\end{equation}
and $0\leq s < t \leq T$. \hfill $\Box$
\begin{cor}\label{cor:3.1}
By virtue of Lemma \ref{lem:2.1} the proof of Theorem \ref{thm:1.1} can be carried over
almost verbatim for approximants $\{\widehat{U}_n(t,s)\}_{(t,s)\in \gD, n\geq1}$ \emph{:}
\be\la{eq:1.6-1}
\begin{split}
\widehat{W}_{k}^{(n)}(t,s) :=&e^{-\tfrac{t-s}{n} B(t_{k})}e^{-\tfrac{t-s}{n} A}, \quad k = 1,2,\ldots,n,\\
\widehat{U}_n(t,s) :=& \widehat{W}_n^{(n)}(t,s)\widehat{W}_{n-1}^{(n)}(t,s)
\times \cdots \times  \widehat{W}_2^{(n)}(t,s) \widehat{W}_1^{(n)}(t,s),\\
\end{split}
\ee
as well as for \textit{self-adjoint} approximants $\{\widetilde{U}_n(t,s)\}_{(t,s)\in \gD, n\geq1}$
\emph{:}
\be\la{eq:1.6-2}
\begin{split}
\widetilde{W}_{k}^{(n)}(t,s) :=&e^{-\tfrac{t-s}{n} A/2}e^{-\tfrac{t-s}{n} B(t_{k})}e^{-\tfrac{t-s}{n} A/2},
\quad k = 1,2,\ldots,n,\\
\widetilde{U}_n(t,s) :=& \widetilde{W}_n^{(n)}(t,s)\widetilde{W}_{n-1}^{(n)}(t,s)
\times \cdots \times  \widetilde{W}_2^{(n)}(t,s) \widetilde{W}_1^{(n)}(t,s).\\
\end{split}
\ee
For the both case the rate of convergence $\varepsilon_{\alpha,\beta}(n)$ for approximants
\emph{(\ref{eq:1.6-1}),(\ref{eq:1.6-2})} is the same as in \emph{(\ref{eq:1.11})}.
\end{cor}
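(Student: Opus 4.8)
The plan is to re-run the four steps of the proof of Theorem \ref{thm:1.1} and to isolate the only two places where the precise factorisation of $W_k^{(n)}$ was used. The telescoping identity \eqref{eq:3.1} and the ensuing trace-norm splitting \eqref{eq:3.1a} are purely algebraic: they use only the multiplicative form of the approximants together with the cocycle relation \eqref{eq:1.2} for $U(t,s)$, so they carry over word for word once $W_k^{(n)}$ is replaced by $\widehat{W}_k^{(n)}$ or $\widetilde{W}_k^{(n)}$. Hence I only have to reproduce, for the new approximants, (i) the operator-norm estimates \eqref{eq:3.2a}--\eqref{eq:3.2b} and (ii) the trace-norm bounds \eqref{eq:3.3a}--\eqref{eq:3.3b} for the two half-products.

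For (i) I would invoke that the operator-norm rate $\varepsilon_{\alpha,\beta}(n)$ is available for the reversed-order and for the symmetric (Strang) products by the same evolution-semigroup method as in \cite{NSZ2017,NSZ2019,NSZ2020} (and \cite{IT1998} in the Lipschitz case): each converges to the same solution operator $\{U(t,s)\}$ with an order no worse than that of \eqref{eq:1.6}, because the dominant error comes from freezing $B$ at the left endpoints $t_k$, which is common to all three schemes. With $k_n/n\to 1/2$ imposed exactly as in step 2, the half-products again approximate $U(t,(t+s)/2)$ and $U((t+s)/2,s)$, so \eqref{eq:3.2a}--\eqref{eq:3.2b} hold with at most different constants $R^{(1,2)}_{\alpha,\beta}$.

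The genuine content is (ii), namely that Lemma \ref{lem:2.1} still applies to the half-products. Writing $\tau:=\tfrac{t-s}{n}$, the reversed approximant is already in the required shape, since
\begin{equation*}
\prod_{k=k_n}^{1}\widehat{W}_k^{(n)}(t,s)=\prod_{k=k_n}^{1} e^{-\tau B(t_k)}\,e^{-\tau A}
=\prod_{j} V_j\,e^{-t_j A},\qquad V_j:=e^{-\tau B(t_j)},\quad t_j:=\tau,
\end{equation*}
with contractions $\|V_j\|\le 1$. For the symmetric approximant I would regroup each factor as $\widetilde{W}_k^{(n)}=V_k\,e^{-\tau A/2}$ with the contraction $V_k:=e^{-\tau A/2}e^{-\tau B(t_k)}$, so that $\prod_{k=k_n}^{1}\widetilde{W}_k^{(n)}=\prod_{k=k_n}^{1}V_k\,e^{-\tau A/2}$ is again of the form covered by Lemma \ref{lem:2.1}, now with $t_j=\tau/2$. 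In both cases the cumulative $A$-time in the half-product, namely $k_n\tau$ respectively $k_n\tau/2$, tends to a strictly positive multiple of $(t-s)$, so Lemma \ref{lem:2.1} delivers a bound of the form $\|e^{-\kappa(t-s)A}\|_1$ for some $\kappa>0$, which is finite by the Gibbs property (A4). The upper half-products of \eqref{eq:3.3b} are treated identically, passing to adjoints inside $\|\cdot\|_1$ if convenient, since $A$ and the $B(t_k)$ are self-adjoint. This yields \eqref{eq:3.3a}--\eqref{eq:3.3b} with, possibly, a different value of the fraction inside the trace norm.

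Feeding (i) and (ii) into \eqref{eq:3.1a} and repeating step 4 then produces \eqref{eq:1.11} for $\widehat{U}_n$ and $\widetilde{U}_n$ with the same $\varepsilon_{\alpha,\beta}(n)$; the only effect of the altered fraction $\kappa$ is on the $(t,s)$-dependent prefactor $R_{\alpha,\beta}(t,s)$ of \eqref{eq:3.4}, not on the rate. I expect the one non-routine point to be (i) rather than (ii): verifying that the symmetric scheme does not improve upon the common order in a way that would have to be tracked. Since the corollary only claims an upper bound of the same order, it suffices to dominate the Strang error by the ordered/reversed one, without quantifying any cancellation the symmetric scheme might offer.
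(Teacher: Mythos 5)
Your proposal is correct and follows essentially the same route as the paper, whose entire proof of the corollary is the one-line assertion that Lemma \ref{lem:2.1} lets the four steps of the proof of Theorem \ref{thm:1.1} run almost verbatim: your regroupings $\widehat{W}_{k}^{(n)}=V_k\,e^{-\tau A}$ and $\widetilde{W}_{k}^{(n)}=\bigl(e^{-\tau A/2}e^{-\tau B(t_k)}\bigr)e^{-\tau A/2}$ are exactly how that lemma is meant to be applied to the half-products, and your appeal to \cite{IT1998,NSZ2017,NSZ2019,NSZ2020} for the operator-norm rates of the reversed and symmetric schemes is precisely the input the paper tacitly assumes. Your observation that only the constant $\kappa$ in $\|e^{-\kappa(t-s)A}\|_1$, and hence the prefactor $R_{\alpha,\beta}(t,s)$ but not the rate $\varepsilon_{\alpha,\beta}(n)$, changes is the correct accounting.
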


Note that extension of Theorem \ref{thm:1.1} to Gibbs semigroups generated by a family
of non-negative self-adjoint operators $\{A(t)\}_{t \in \cI}$ can be done
along the arguments outlined in Section 2 of \cite{VWZ2009}. To this end one needs to add
more conditions to (A1)-(A4) that allow to control the family $\{A(t)\}_{t \in \cI}$.

Here we also comment that a general scheme of the \textit{lifting} due to Lemma \ref{lem:2.1} and
Theorem \ref{thm:1.1} can be applied to any symmetrically-normed ideal $\mathcal{C}_{\phi}(\mathfrak{H})$
of compact operators $\mathcal{C}_{\infty}(\mathfrak{H})$, \cite{Zag2019} Ch.6.
We return to this point elsewhere.

\section*{Acknowledgments}
This paper was motivated by my lecture at the Conference "Operator Theory and Krein Spaces"
(Technische Universit\"{a}te Wien, 19-22 December 2019), dedicated to the memory of
Hagen Neidhardt.

I am thankful to organisers: Jussi Behrndt, Aleksey Kostenko, Raphael Pruckner, Harald Woracek,
for invitation and hospitality.
\bibliographystyle{plain}

\begin{thebibliography}{}

\end{thebibliography}


\begin{thebibliography}{10}


\bibitem{EngNag2000}
K.-J. Engel and R.~Nagel.
\newblock {\em One-parameter semigroups for linear evolution equations}.
\newblock Springer-Verlag, New York, 2000.

\bibitem{Ev1976}
D.~E. Evans.
\newblock Time dependent perturbations and scattering of strongly continuous
  groups on {B}anach spaces.
\newblock {\em Math. Ann.}, 221(3):275--290, 1976.

\bibitem{How1974}
J.~S. Howland.
\newblock Stationary scattering theory for time-dependent {H}amiltonians.
\newblock {\em Math. Ann.}, 207:315--335, 1974.

\bibitem{IT1998}
T. Ichinose and H. Tamura.
\newblock Error estimate in operator norm of exponential product formulas for
  propagators of parabolic evolution equations.
\newblock {\em Osaka J. Math.}, 35(4):751--770, 1998.


\bibitem{Kato1961}
T. Kato.
\newblock Abstract evolution equation of parabolic type in Banach and Hilbert spaces.
\newblock {\em Nagoya Math. J.}, 19:93--125, 1961.


\bibitem{MR2000}
{{S. Monniaux and A. Rhandi.
\newblock Semigroup method to solve non-autonomous evolution equations.
\newblock {\em {Semigroup Forum}}, 60:122--134, 2000}}

\bibitem{NN2002}
{{R. Nagel and G. Nickel.
\newblock Well-poseness of nonautonomous abstract {C}auchy problems.
\newblock {\em {Progr. Nonlinear Diff.Eqn. and Their Appl.}}, vol.50:279--293, 2002}}

\bibitem{Nei1979}
H.~Neidhardt.
\newblock {\em {Integration of Evolutionsgleichungen mit Hilfe von
  Evolutionshalbgruppen}}.
\newblock Dissertation, AdW der DDR.
\newblock Berlin 1979.

\bibitem{Nei1981}
H. Neidhardt.
\newblock On abstract linear evolution equations. {I}.
\newblock {\em Math. Nachr.}, 103:283--298, 1981.

\bibitem{NSZ2017}
H. Neidhardt, A. Stephan, and V.A. Zagrebnov.
\newblock {On convergence rate estimates for approximations of solution
  operators for linear non-autonomous evolution equations.}
\newblock {\em {Nanosyst., Phys. Chem. Math.}}, 8(2):202--215, 2017.

\bibitem{NSZ2018}
H. Neidhardt, A. Stephan, and V.~A. Zagrebnov.
\newblock {Remarks on the operator-norm convergence of the
Trotter product formula}.
\newblock {\em {Int.Eqn.Oper.Theory}}, 90:1--15, 2018.

\bibitem{NSZ2019} H.~{Neidhardt}, A.~{Stephan}, and V.~A. {Zagrebnov}.
\newblock {Trotter Product Formula and Linear
Evolution Equations on Hilbert Spaces}.
\newblock {\textit{Analysis and Operator Theory.} Dedicated in Memory of Tosio
Kato's 100th Birthday,}
Springer vol.146, Berlin 2019,  pp.~271--299.

\bibitem{NSZ2020}
H. Neidhardt, A. Stephan, and V.~A. Zagrebnov.
\newblock {Convergence rate estimates for Trotter product approximations of
solution operators for non-autonomous Cauchy problems}.
\newblock {Publ.RIMS Kyoto Univ.}, {56}:1--53, 2020.

\bibitem{NZ2009}
{{H. Neidhardt and V.~A. Zagrebnov.
\newblock Linear non-autonomous {C}auchy problems and evolution semigroups.
\newblock {\em Adv. Differential Equations}, 14(3-4):289--340, 2009.}}


\bibitem{Nickel2000}
{{G. Nickel.
\newblock Evolution semigroups and product formulas for
nonautonomous {C}auchy problems.
\newblock {\em Math.Nachr}, 212:101--116, 2000.}}

\bibitem{Sob1961}
P. E. Sobolevskii.
\newblock Parabolic equations in a Banach space with an unbounded variable operator,
a fractional power of which has a constant domain of definition.
\newblock {\em Dokl. Akad. Nauk USSR}, 138(1): 59–62, 1961.

\bibitem{Tanabe1979}
H.~Tanabe.
\newblock {\em Equations of evolution}.
\newblock Pitman (Advanced Publishing Program), Boston, Mass.-London, 1979.


\bibitem{VWZ2009}
P.-A. Vuillermot, W. F. Wreszinski, and V. A. Zagrebnov.
\newblock A general Trotter–Kato formula for a class of evolution operators.
\newblock {\em Journal of Functional Analysis}, 257(7): 2246-2290, 2009.

\bibitem{Yagi1989}
A. Yagi.
\newblock Parabolic evolution equation in which the coefficients are the generators
of infinitly differentiable semigroups, I.
\newblock {\em Funkcialaj Ekvacioj}, 32:107--124, 1989.


\bibitem{Zag2019} V. A. Zagrebnov, \emph{Gibbs Semigroups}, Operator Theory Series:
Advances and Applications, Vol. 273, Bikh\"{a}user - Springer, Basel 2019.


\end{thebibliography}
\def\cprime{$'$}

\end{document}